\newcommand{\mathsym}[1]{{}}
\newcommand{\unicode}[1]{{}}
\newtheorem{corollary}{Corollary}
\newtheorem{theorem}{Theorem}
\newtheorem{definition}{Definition}[section]
\title{Proofs For Progressively Generalized Fibonacci Identities Using Maximal Independent Sets of Tree Graphs}
\author{Connor Oxenhorn}
\date{April 26, 2022}
\begin{document}
\maketitle
\begin{abstract}
This paper generalizes a graph theoretic proof technique for a Fibonacci identity proposed by Lee Knisley Sanders, and explores characteristics of these generalized theorems ad infinitum.
\end{abstract}
\section{Introduction}
Finding unexpected connections across various areas of mathematics is exciting and can offer new perspectives on various thoroughly-studied concepts. In this spirit, we will look at how one of the most-studied sequences—the Fibonacci sequence—makes an elegant appearance within graph theory. We initially present Sanders' graph theoretic proof of a Fibonacci identity and then augment it to prove progressively-generalized versions of the identity. Finally, we entertain an exploration of the consequences of continuing this generalizing pattern of the identities ad infinitum.
\subsection{Fibonacci Sequences}

\begin{definition}[Fibonacci Sequence]
Suppose $\alpha,\beta\in\mathbb{R}$. Then we say a sequence $\{G_n\}_{n\in\mathbb{N}}$ is a Fibonacci sequence if $$G_n = 
\begin{cases}
	\alpha & \text{$n = 0$}\\
    	\beta & \text{$n = 1$}\\
      	G_{n-1} + G_{n-2} & \text{$n > 1$}\\
\end{cases}.$$ 
\end{definition}

We will refer to $G_0,G_1$ as the \textit{seed terms}, and $\alpha,\beta$ as the \textit{seed values} of a Fibonacci sequence (respectively). 
\par
Additionally, two \textit{types} of Fibonacci sequences will be referenced frequently. The first is $\{F_n\}_{n\in\mathbb{N}}$, sometimes referred to as the \textit{classical Fibonacci sequence}, with seed values $\alpha = 0,\beta = 1$. The second is a generalized Fibonacci sequence $\{G_n\}_{n\in\mathbb{N}}$ with seed values $\alpha,\beta \in \mathbb{R}$. Note that these are not mutually-exclusive classifications, i.e. $$\{F_n\}_{n\in\mathbb{N}}\in\big\{\{G_n\}_{n\in\mathbb{N}} : \alpha,\beta\in\mathbb{R}\big\}.$$

\subsection{Graph Theoretic Definitions}

\begin{definition}[Undirected Simple Graph]
Let $G=(V,E)$ be an ordered pair consisting of a vertex set $V$, and an edge set $E = \big\{(x,y)\in V\times V : x\neq y,(x,y)=(y,x) \big\}.$ Then we call $G$ an undirected simple graph.
\end{definition}

\begin{definition}[Vertex Degree]
Given an undirected simple graph $G=(V,E)$, we define the degree of a vertex $x\in V$ as $$\rho(x) = \big\vert\{(i,j)\in E : x=i \text{ or } x=j\}\big\vert.$$
\end{definition}

\begin{definition}[Path]
Let $G=(V,E)$ be an undirected simple graph. We call an ordered set of edges $\{(x_0,x_1),(x_1,x_2),\dots,(x_{m-2},x_{m-1}),(x_{m-1},x_m)\}$ a path on $G$.
\end{definition}

\begin{definition}[Tree]
Let $G=(V,E)$ be an undirected simple graph. $G$ is connected if there exists a path between each $x,y\in V$. $G$ is acyclic if there is no non-self-intersecting path on $G$ that begins and ends at the same vertex. We call a connected, acyclic undirected simple graph a tree.
\end{definition}

Fittingly, a vertex $x$ of a tree $G$ is called a ``leaf" if it has the property that $\rho(x)=1.$

\begin{definition}[Maximal Independent Set]
Let $G=(V,E)$ be an undirected simple graph. An independent set of vertices on $G$ is a subset $\mathcal{V}\subseteq V$ such that $(x,y)=(y,x)\not\in E$ for all $x,y\in\mathcal{V}.$ A maximal independent set (MIS) on $G$ is an independent set that is not contained in any other independent set on $G$.
\end{definition}
\section{Sanders' Tree}
We begin with the $n$-vertex tree $T$ (shown below in blue), referred to as the \textit{core tree}. Now, let $p(T)$, called the expanded tree, be the tree created by adding a leaf to each vertex of $T$ (shown below in red and blue).
\begin{center}
\includegraphics[width=0.8\textwidth]{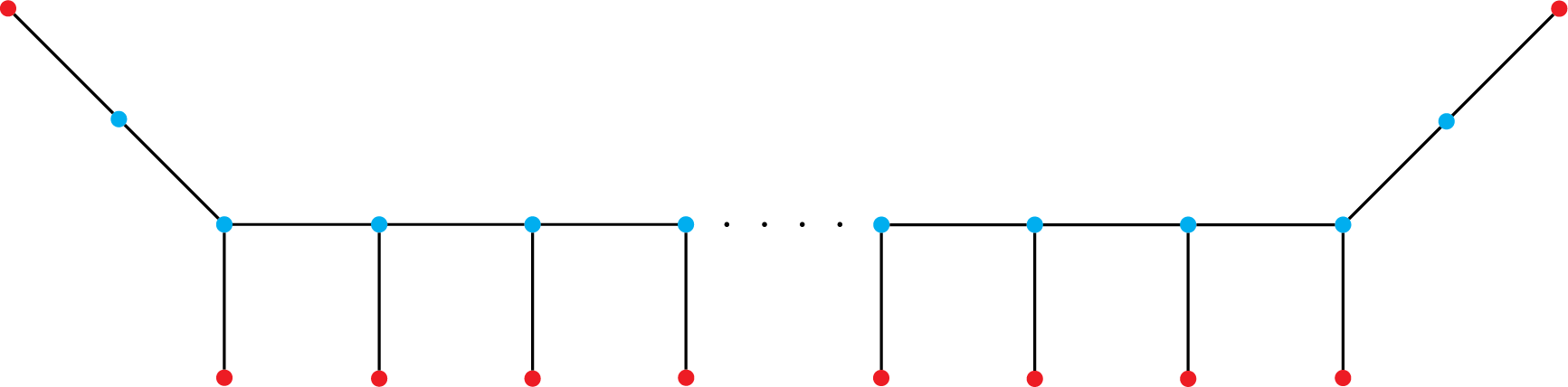}
\end{center}
We call the central $n-2$ vertices of the core tree the \textit{central path} (shown below in green). Each vertex of the central path is labelled $v_i$, and each leaf of the expanded tree adjacent to $v_i$ is labelled $z_i$ as illustrated below for $i=1,2,\dots,n-2$. 
\newline
\begin{center}
\includegraphics[width=0.8\textwidth]{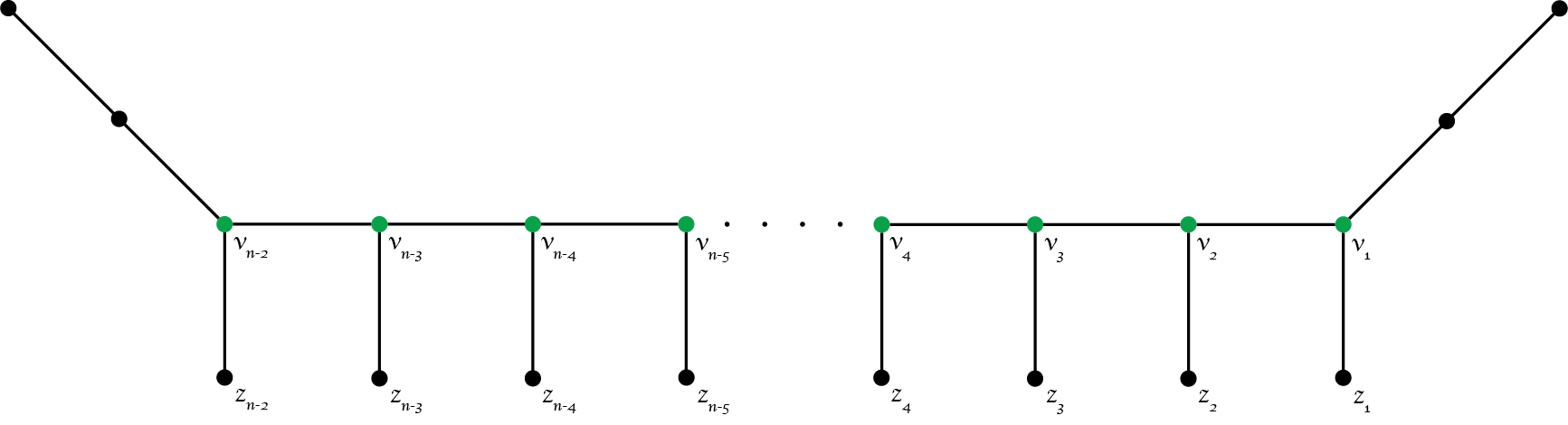}
\end{center}
Next, let $\lambda(x)$ be the number of maximal independent sets (MISs) of $p(T)$ that contain the vertex $x$, as well as $r(x),l(x)$ be the number of MISs that contain $x$ and only the vertices to the right, left of $x$ (respectively). Note that vertices ``to the right" or ``to the left" of a leaf adjacent to a vertex in the central path include its adjacent vertex in the central path. 
\par
Finally, we label each vertex of the central path and its adjacent leaves from left to right with the value of $l(x)$. This is the graph construction and labeling scheme Sanders presents, which we will generalize in following sections.
\newline
\begin{center}
\includegraphics[width=0.6\textwidth]{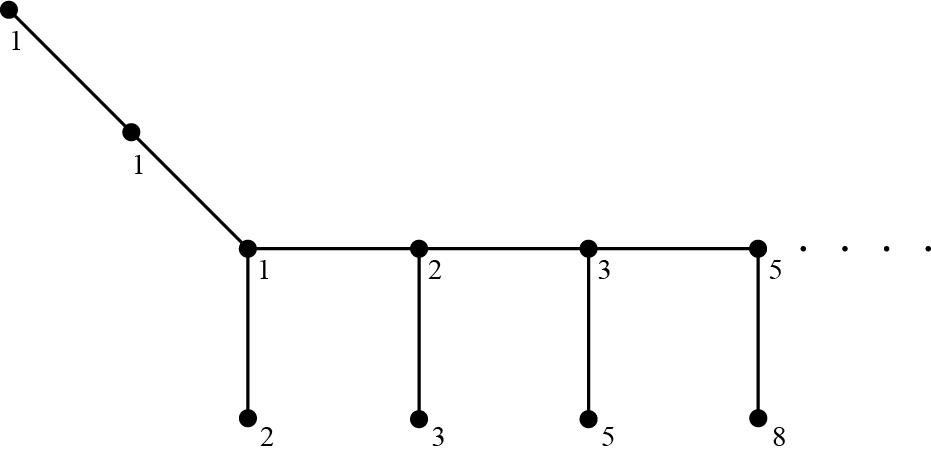}
\end{center}
\section{Generalizing Sanders' Proof}
\subsection{Fibonacci Identities}
\begin{theorem}[Ruggles, 1963]\label{thm_gf}
Let $G_n\in\{G_n\}_{n\in\mathbb{N}}$ and $F_i\in\{F_i\}_{i\in\mathbb{N}}$. Then for all $1 \leq i \leq n,$ $$G_n = G_{n-i+1}F_i + G_{n-i}F_{i-1}.$$
\end{theorem}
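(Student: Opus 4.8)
The plan is to fix $n$ and induct on $i$, using only the fact that both $\{G_n\}_{n\in\mathbb{N}}$ and $\{F_i\}_{i\in\mathbb{N}}$ satisfy the same linear recurrence $X_m = X_{m-1} + X_{m-2}$; no graph theory is needed for this particular identity. For the base case $i = 1$ the right-hand side is $G_n F_1 + G_{n-1}F_0 = G_n\cdot 1 + G_{n-1}\cdot 0 = G_n$, using the seed values $F_0 = 0$ and $F_1 = 1$, so the identity holds. (As a sanity check one can also verify $i = 2$ directly: $G_{n-1}F_2 + G_{n-2}F_1 = G_{n-1} + G_{n-2} = G_n$.)

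For the inductive step, suppose $1 \le i < n$ and that $G_n = G_{n-i+1}F_i + G_{n-i}F_{i-1}$. Since $i < n$ we have $n-i+1 > 1$, so the defining recurrence for $\{G_n\}$ gives $G_{n-i+1} = G_{n-i} + G_{n-i-1}$. Substituting this in and regrouping terms,
\begin{align*}
G_n &= (G_{n-i} + G_{n-i-1})F_i + G_{n-i}F_{i-1}\\
&= G_{n-i-1}F_i + G_{n-i}(F_i + F_{i-1})\\
&= G_{n-(i+1)}F_i + G_{n-(i+1)+1}F_{i+1},
\end{align*}
where the last equality uses $F_{i+1} = F_i + F_{i-1}$. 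This is precisely the claimed identity with $i$ replaced by $i+1$, so by induction the identity holds for all $1 \le i \le n$ (the top case $i = n$ recovering the familiar $G_n = \beta F_n + \alpha F_{n-1}$).

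The only point that requires care — and the main thing to get right in writing this up — is the index bookkeeping: one must check that the recurrence $G_{n-i+1} = G_{n-i} + G_{n-i-1}$ is legitimately applicable, i.e. that $n-i+1 > 1$, which holds exactly when $i+1 \le n$, which is exactly the range in which we are establishing the $(i+1)$st instance. Everything else is mechanical. It is worth noting that the graph constructions of the preceding sections do not enter here; rather, they will serve later to attach a combinatorial interpretation to the special case in which the terms $G_n$ are realized as the labels $l(x)$ on Sanders' expanded tree.
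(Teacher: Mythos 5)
Your proof is correct, but it takes a genuinely different route from the paper. The paper proves Theorem~\ref{thm_gf} by extending Sanders' maximal-independent-set machinery: it introduces the weighted left MIS counts $\mathscr{L}(x_i) = \beta l(x_i) + \alpha l(x_{i-1})$ on the expanded tree $p(T)$, computes the total MIS count $M_{p(T)}$ in two ways to obtain $G_{n+2} = G_{n-i+1}F_{i+2} + G_{n-i}F_{i+1}$, and then shifts indices. You instead fix $n$ and induct on $i$, using nothing beyond the two recurrences $G_{m} = G_{m-1}+G_{m-2}$ and $F_{m} = F_{m-1}+F_{m-2}$ together with the seeds $F_0=0$, $F_1=1$; your base case, the regrouping $(G_{n-i}+G_{n-i-1})F_i + G_{n-i}F_{i-1} = G_{n-i-1}F_i + G_{n-i}F_{i+1}$, and the index constraint $n-i+1>1$ are all handled correctly. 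What your argument buys is brevity and self-containment: it does not lean on Sanders' three lemmas or the combinatorial weighting scheme, and it isolates exactly where the hypothesis $i \leq n$ is used. What it gives up is precisely the point of the paper, namely the combinatorial interpretation of the identity as a double count of MISs on a tree, which is the mechanism the paper then iterates to reach Theorem~\ref{thm_gg} and the $X^{(k)}_n$ hierarchy; you acknowledge this correctly in your closing remark. As a historical note, your induction is essentially the classical argument of Ruggles, which is why the paper attributes the statement to him while supplying a new proof.
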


We also note a special case of Theorem~\ref{thm_gf} where $i=n$
\begin{equation}
\label{eqn:special_case}
G_n = G_{1}F_n + G_{0}F_{n-1} = \beta F_n + \alpha F_{n-1}.
\end{equation}

\begin{corollary}\label{thm_ff}
Let $F_j\in\{F_j\}_{j\in\mathbb{N}}$. Then for all $1 \leq i \leq n,$ $$F_n = F_{n-i+1}F_i + F_{n-i}F_{i-1}.$$
\end{corollary}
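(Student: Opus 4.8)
The plan is to observe that Corollary~\ref{thm_ff} is nothing more than the special case of Theorem~\ref{thm_gf} in which the generalized sequence $\{G_n\}_{n\in\mathbb{N}}$ is taken to be the classical sequence $\{F_n\}_{n\in\mathbb{N}}$ itself. First I would recall the remark following the definition of a Fibonacci sequence: since $\{F_n\}_{n\in\mathbb{N}}$ is the Fibonacci sequence with seed values $\alpha=0$, $\beta=1$, we have $\{F_n\}_{n\in\mathbb{N}}\in\big\{\{G_n\}_{n\in\mathbb{N}}:\alpha,\beta\in\mathbb{R}\big\}$, so $\{F_n\}_{n\in\mathbb{N}}$ is a legitimate choice of $\{G_n\}_{n\in\mathbb{N}}$ in the hypothesis of Theorem~\ref{thm_gf}.

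With that choice made, I would simply substitute $G_n\mapsto F_n$, $G_{n-i+1}\mapsto F_{n-i+1}$, and $G_{n-i}\mapsto F_{n-i}$ into the identity $G_n = G_{n-i+1}F_i + G_{n-i}F_{i-1}$ supplied by Theorem~\ref{thm_gf}, valid for all $1\leq i\leq n$, to obtain $F_n = F_{n-i+1}F_i + F_{n-i}F_{i-1}$ for all $1\leq i\leq n$, which is exactly the claimed identity. The only point requiring a moment's care is purely notational: the factors $F_{n-i+1},F_{n-i}$ that arise from the ``generalized'' slots and the factors $F_i,F_{i-1}$ that were classical to begin with all refer to the same sequence $\{F_j\}_{j\in\mathbb{N}}$, so no ambiguity is introduced by the substitution.

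There is essentially no obstacle in this route; the corollary is an immediate specialization. Should a self-contained argument be preferred, one could instead induct on $i$ for fixed $n$: the base case $i=1$ is $F_n = F_n F_1 + F_{n-1}F_0 = F_n$, and the step from $i$ to $i+1$ follows by applying the recurrences $F_{i+1}=F_i+F_{i-1}$ and $F_{n-i+1}=F_{n-i}+F_{n-i-1}$ to regroup the terms. Given that Theorem~\ref{thm_gf} is already available, however, the substitution argument is the natural and shortest path, and it is the one I would present.
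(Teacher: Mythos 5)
Your proposal is logically sound but takes a genuinely different route from the paper. The paper proves Corollary~\ref{thm_ff} directly by Sanders' graph-theoretic argument: counting maximal independent sets of the expanded tree $p(T)$ in two ways, first as $M_{p(T)} = 2F_n + F_{n-1} = F_{n+2}$ via the vertices $v_{n-2}, z_{n-2}$, and second as $M_{p(T)} = \lambda(z_i) + \lambda(v_i) = F_{n-i+1}F_{i+2} + F_{n-i}F_{i+1}$ for arbitrary $i$, then re-indexing $i \to i-2$ and $n+2 \to n$. You instead specialize Theorem~\ref{thm_gf} by taking $\{G_n\}$ to be $\{F_n\}$ (seed values $\alpha = 0$, $\beta = 1$). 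Since Theorem~\ref{thm_gf} is stated as an externally attributed result (Ruggles, 1963) rather than something the paper has proved at that point, your specialization is not circular, and it is certainly the shortest path to the identity; your fallback induction on $i$ is also correct. What your route gives up, however, is the entire point of this paper: the combinatorial proof of the Corollary is the base construction that the subsequent sections generalize (weighted left MIS counts for Theorem~\ref{thm_gf}, weighting both sides for Theorem~\ref{thm_gg}), so deriving the Corollary by substitution, while valid, bypasses the MIS-counting machinery that the paper is actually developing.
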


\begin{theorem}\label{thm_gg}
Let $G_j\in\{G_j\}_{j\in\mathbb{N}}$. Then for all $1 \leq i \leq n,$ $$\beta G_{n} + \alpha G_{n-1} = G_{n-i+1}G_i + G_{n-i}G_{i-1}.$$
\end{theorem}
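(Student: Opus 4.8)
The plan is to reduce the generalized identity to Ruggles' identity (Theorem~\ref{thm_gf}), which I would invoke \emph{twice}. The idea is that the left‑hand side $\beta G_n + \alpha G_{n-1}$ is just the $\beta$‑weighted and $\alpha$‑weighted combination of two copies of Ruggles' identity, one at ``height'' $n$ and one at ``height'' $n-1$, so the strategy is to massage the right‑hand side until that combination appears.

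First I would dispose of the boundary case $i=1$ separately: here the right‑hand side is $G_{n}G_{1} + G_{n-1}G_{0} = \beta G_n + \alpha G_{n-1}$ directly from the seed values $G_0=\alpha$, $G_1=\beta$, so the identity is immediate. For $2 \le i \le n$ I would use the special case \eqref{eqn:special_case} to expand the two ``low'' factors in terms of the seeds, writing $G_i = \beta F_i + \alpha F_{i-1}$ and $G_{i-1} = \beta F_{i-1} + \alpha F_{i-2}$ (all Fibonacci indices occurring are nonnegative because $i \ge 2$). Substituting into $G_{n-i+1}G_i + G_{n-i}G_{i-1}$ and collecting the coefficients of $\beta$ and of $\alpha$ yields
$$\beta\bigl(G_{n-i+1}F_i + G_{n-i}F_{i-1}\bigr) + \alpha\bigl(G_{n-i+1}F_{i-1} + G_{n-i}F_{i-2}\bigr).$$

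Now each parenthesized sum is itself an instance of Theorem~\ref{thm_gf}. The first is Ruggles' identity with parameters $(n,i)$, hence equals $G_n$. The second is Ruggles' identity applied to $G_{n-1}$ with index $i-1$, namely $G_{n-1} = G_{(n-1)-(i-1)+1}F_{i-1} + G_{(n-1)-(i-1)}F_{i-2} = G_{n-i+1}F_{i-1} + G_{n-i}F_{i-2}$, which is legitimate precisely because $1 \le i-1 \le n-1$. Therefore the right‑hand side collapses to $\beta G_n + \alpha G_{n-1}$, completing the argument.

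The only real care needed is bookkeeping on index ranges --- ensuring that Theorem~\ref{thm_gf} is applied only when the index lies between $1$ and the upper term --- and that is exactly what dictates the separate treatment of $i=1$ and the restriction $2\le i\le n$ in the main step; no deeper obstacle arises. As an aside, there is an even shorter route that avoids Ruggles entirely: setting $f(i) = G_{n-i+1}G_i + G_{n-i}G_{i-1}$, one checks $f(i) - f(i+1) = G_i\bigl(G_{n-i+1} - G_{n-i-1}\bigr) - G_{n-i}\bigl(G_{i+1} - G_{i-1}\bigr) = G_iG_{n-i} - G_{n-i}G_i = 0$ by the Fibonacci recurrence, so $f$ is constant on $\{1,\dots,n\}$ and equals $f(1) = \beta G_n + \alpha G_{n-1}$.
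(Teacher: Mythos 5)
Your proof is correct, but it takes a genuinely different route from the paper. The paper proves Theorem~\ref{thm_gg} graph-theoretically: it weights \emph{both} the left and right MIS counts of the expanded tree $p(T)$, computes the total count $M_{p(T)}$ two ways (once at a generic position $i$, once at the vertices $v_{n-2},z_{n-2}$), and equates the results before shifting indices. You instead treat Theorem~\ref{thm_gf} as a black box and reduce the generalized identity to two invocations of it --- expanding the ``low'' factors via \eqref{eqn:special_case}, collecting the $\beta$- and $\alpha$-coefficients, and recognizing each as a Ruggles instance at heights $n$ and $n-1$. Your index bookkeeping is sound: the $i=1$ case is handled separately, and the second Ruggles application needs exactly $1 \le i-1 \le n-1$, which your restriction $2 \le i \le n$ guarantees. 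Your telescoping aside ($f(i)-f(i+1)=0$ by the recurrence, so $f$ is constant and equals $f(1)$) is also valid and is arguably the cleanest self-contained argument, requiring neither Ruggles nor any combinatorics. What your approach buys is brevity and rigor in the index ranges; what it gives up is the point of the paper, namely that the identity arises from counting maximal independent sets of a tree --- the weighted-MIS construction is the paper's contribution, and your argument bypasses it entirely. Both are legitimate, but if the goal is to exhibit the graph-theoretic mechanism, your proof should be presented as an independent verification rather than a replacement.
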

\subsection{Proofs}
In this section we outline Sanders' graph theoretic proof of Corollary~\ref{thm_ff}, before generalizing the technique to prove Theorem~\ref{thm_gf} and Theorem~\ref{thm_gg}.
\paragraph{Corollary 1}
\begin{proof}
Let $M_{p(T)}$ be the total number of MISs of an expanded tree. Sanders proves Corollary~\ref{thm_ff} using three results. The first is $M_{p(T)} = \lambda(z_i) + \lambda(v_i)$ for any leaf $z_i$ and its adjacent vertex in the central path $z_i$ \cite{sanders1990aproof}. The second is $\lambda(v_i) = l(v_i)r(v_i), \lambda(z_i) = l(z_i)r(z_i)$; this boils down to a simple combinatorial argument. The third is $r(v_i) = F_{i+1}$, $l(v_i) = F_{n-i}$ and $r(z_i) = F_{i+2}$, $l(z_i) = F_{n-i+1}$ \cite{sanders1990aproof}.
\par
Using the above results with the vertices $v_{n-2}$ and $z_{n-2}$, it can be shown \cite{sanders1990aproof} $$M_{p(T)} = 2F_n + F_{n-1} = F_{n+2}.$$ Using this in conjunction with the second and third results, we get the following formula formula \cite{sanders1990aproof} $$F_{n+2} = F_{n-i+1}F_{i+2} + F_{n-i}F_{i+1}$$ $$\text{for all }1 \leq i \leq n.$$
We can adjust the subscripts $i \rightarrow (i-2)$ and $(n+2) \rightarrow n$ to get Corollary~\ref{thm_ff}.
\end{proof}
\paragraph{Theorem 1}\label{thm_gf_pf}
\begin{proof}
We will follow a similar process to the above, however we tweak it by introducing \textit{weighted left MIS counts}, denoted $\mathscr{L}(x_i)$ for a vertex $x_i$, and defined by $\mathscr{L}(x_i) = \beta l(x_i) + \alpha l(x_{i-1})$ for any $\alpha,\beta\in\mathbb{R}$. Using (\ref{eqn:special_case}) and Sanders' third result, we know that $\mathscr{L}(v_i) = \beta F_{n-i} + \alpha F_{n-i-1} = G_{n-i}$ and $\mathscr{L}(z_i) = \beta F_{n-i+1} + \alpha F_{n-i} = G_{n-i+1}$ for some generalized Fibonacci sequence $\{G_j\}_{j\in\mathbb{N}}$ with $G_0 = \alpha, G_1 = \beta$ as its seed terms and seed values. It is worth noting that the method Sanders uses to prove Corollary \ref{thm_ff} inherently assumes the weighted left MIS counts to have weights $\alpha=0,\beta=1$ which are the seed values of the classical Fibonacci sequence.
\par
We now re-label the central path of $p(T)$ and its adjacent leaves from left to right in the same manner as before, but changing each vertex label from $l(x)$ to $\mathscr{L}(x)$. Note that we are only weighting the left MIS counts, not the right MIS counts.
\begin{center}
\includegraphics[width=0.6\textwidth]{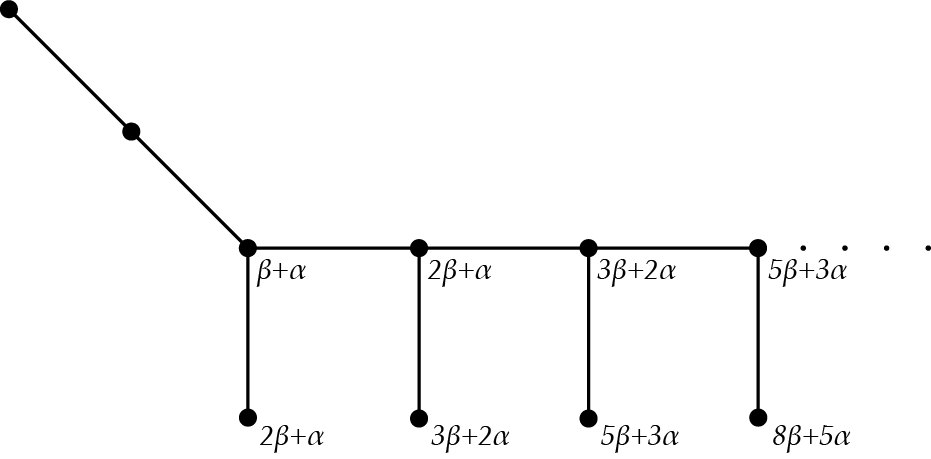}
\end{center}
Using Sanders' three results and (\ref{eqn:special_case}) we find that $$M_{p(T)} = 2(\beta F_{n} + \alpha F_{n-1}) + (\beta F_{n-1} + \alpha F_{n-2}) = 2G_n + G_{n-1} = G_{n+2}.$$ Additionally, $$M_{p(T)} = \lambda(z_i) + \lambda(v_i) = \mathscr{L}(z_i)r(z_i) + \mathscr{L}(v_i)r(v_i) = G_{n-i+1}F_{i+2} + G_{n-i}F_{i+1}.$$ Combining these results, we get $$G_{n+2} = G_{n-i+1}F_{i+2} + G_{n-i}F_{i+1}$$ $$1 \leq i \leq n.$$
Again, we can adjust the value of $i \rightarrow (i-2)$ and $(n+2) \rightarrow n$ to get Theorem 1.
\end{proof}
\par
We should also note that we get the same result by weighting the $r(x_i)$ instead of the $l(x_i)$. This is because the products on the right-hand side of Theorem~\ref{thm_gf} go from $G_nF_{1} \rightarrow G_1F_n$ and $G_{n-1}F_0 \rightarrow G_0F_{n-1}$ as $i$ goes from $1\to n$. In this way, choosing $r(x_i)$ or $l(x_i)$ to be the weighted MIS counts is simply a matter of choosing which end of the range of $i$ we start with (i.e. $1 \rightarrow n$ or $n \rightarrow 1$).
\paragraph{Theorem 2}
\begin{proof}
By weighting both the $l(x)$ and $r(x)$ and following the method used in the proof of Theorem~\ref{thm_gf}, we get $$M_{p(T)} = G_{n-i+1}G_{i+2} + G_{n-i}G_{i+1}.$$ Using the second result with the vertices $v_{n-2}$ and $z_{n-2}$, we also find that $$M_{p(T)} = G_3G_n+G_2G_{n-1}.$$ Combining these two equations, we get $$G_3G_n+G_2G_{n-1} = G_{n-i+1}G_{i+2} + G_{n-i}G_{i+1}$$ $$1 \leq i \leq n.$$
We can again adjust the range of $i$ and $n$ to get
$$G_3G_{n-2} + G_2G_{n-3} = G_{n-i+1}G_i + G_{n-i}G_{i-1}$$ $$1 \leq i \leq n.$$
Lastly, using the result of the above equation where $i=n$, we know $$G_3G_{n-2} + G_2G_{n-3} = G_1G_{n} + G_0G_{n-1} = \beta G_{n} + \alpha G_{n-1}.$$
Hence, we get $$\beta G_{n} + \alpha G_{n-1} = G_{n-i+1}G_i + G_{n-i}G_{i-1}.$$ 
\end{proof}
\subsection{Discussion}\label{discussion}
These theorems represent an elegant example of progression in generalization. Working backwards, 
$$\beta G_{n} + \alpha G_{n-1} = G_{n-i+1}G_i + G_{n-i}G_{i-1}$$ is the more general case of $$G_n = G_{n-i+1}F_i + G_{n-i}F_{i-1}$$ where, in the latter case, the values of $\alpha,\beta$ are $0,1$ respectively. Furthermore, $$F_n = F_{n-i+1}F_i + F_{n-i}F_{i-1}$$ is a special case of the above special case where the Fibonacci terms all belong to $\{F_n\}_{n\in\mathbb{N}}$. As is the case with Theorem~\ref{thm_gf}, writing Corollary~\ref{thm_ff} as $$\beta F_{n} + \alpha F_{n-1} = F_{n-i+1}F_i + F_{n-i}F_{i-1}$$ would be redundant as $\alpha=0,\beta=1$.
\par
More abstractly, it is as if these related theorems contain an intersection of two different Fibonacci sequences. As these two Fibonacci sequences become generalized (i.e. their base values are no longer $0,1$), an adjustment needs to be made wherein the left-hand side of the identity becomes progressively more generalized as well, incorporating more of the base values of the generalized sequence(s). This first  happens with the `generalization' of the individual term(s) on the left-hand side, and then the `generalization' of the constants $\alpha,\beta$ that are attached to these term(s).This can occur for none, one, or both of the Fibonacci sequences incorporated into these theorems, and this can be seen in all three theorems above.
\par
It is also imperative to note that $\alpha,\beta$ in Theorem~\ref{thm_gg} may be different from the base values of the sequence to which the terms on the left-hand side of the identity belong. Because we can interpret this theorem as containing two different Fibonacci sequences—as is apparent in the above equations as well—it may be more appropriate to re-write Theorem~\ref{thm_gg} as  
\begin{equation}\label{eqn: gen_case}
\beta^\prime G_{n} + \alpha^\prime G_{n-1} = G_{n-i+1}G_i^\prime + G_{n-i}G_{i-1}^\prime
\end{equation}
$$\text{for all } 1 \leq i \leq n.$$
\section{Infinite Generalization}
The above begs the question: how far can we generalize this technique? What if, in the same way in which we weighted the left (and right) MIS counts, we weight the weighted MIS counts? For notational simplicity, we use the statement of Theorem~\ref{thm_gg} as originally given, not the interpretation given in (\ref{eqn: gen_case}). However, one can easily verify that the below applies to this broader interpretation of the theorem as well. 
\par
Letting $X^{(k)}_n$ denote the value of the left-hand side of the $k^{th}$-generalized version of Theorem~\ref{thm_gg}, such a progression would look like $$X^{(0)}_n = \beta G_{n} + \alpha G_{n-1} = G_{n-i+1}G_i + G_{n-i}G_{i-1}$$ $$X^{(1)}_n = X^{(0)}_{n-i+1}G_i + X^{(0)}_{n-i}G_{i-1}$$ $$X^{(2)}_n = X^{(0)}_{n-i+1}X^{(0)}_{i} + X^{(0)}_{n-i}X^{(0)}_{i-1}$$ $$X^{(3)}_n = X^{(1)}_{n-i+1}X^{(0)}_{i} + X^{(1)}_{n-i}X^{(0)}_{i-1}$$ $$\vdots$$ $$X^{(2k)}_n = X^{(2k-k-1)}_{n-i+1}X^{(2k-k-1)}_{i} + X^{(2k-k-1)}_{n-i}X^{(2k-k-1)}_{i-1}$$ $$X^{(2k+1)}_n = X^{(2k-k)}_{n-i+1}X^{(2k-k-1)}_{i} + X^{(2k-k)}_{n-i}X^{(2k-k-1)}_{i-1}$$ $$\vdots$$
\par
The question is, what are these $X^{(k)}_n$ terms and do they have any generalizable characteristics? 
\subsection{Fibonacci Terms in Disguise}
To answer this, we begin by noting that $$X^{(0)}_{n-1} + X^{(0)}_{n-2} = (\beta G_{n-1} + \alpha G_{n-2}) + (\beta G_{n-2} + \alpha G_{n-3})$$ $$ = \beta(G_{n-1} + G_{n-2}) + \alpha(G_{n-2} + G_{n-3}) = \beta G_{n} + \alpha G_{n-1} = X^{(0)}_{n}.$$ Thus, inductively using Theorem~\ref{thm_gg}, we see that for some fixed $k\in\mathbb{N}$, $\{X^{(k)}_n\}_{n\in\mathbb{N}}$ is a Fibonacci sequence. Furthermore, consider a sequence $\{Y_n\}_{n\in\mathbb{N}}$ defined by the linear combination $$Y_n = a_1X^{(k_1)}_{n_1} + a_2X^{(k_2)}_{n_2} + \dots + a_mX^{(k_m)}_{n_m}.$$
\begin{theorem}\label{thm_yn}
$\{Y_n\}_{n\in\mathbb{N}}$ is a Fibonacci sequence in $n$ for all $k_i,n_i\in\mathbb{N},a_i\in\mathbb{R}$.
\end{theorem}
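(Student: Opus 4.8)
The plan is to deduce the theorem from the fact — established in the discussion immediately preceding the statement — that for each fixed $k\in\mathbb{N}$ the sequence $\{X^{(k)}_n\}_{n}$ is a Fibonacci sequence, together with two elementary closure properties of the class of Fibonacci sequences: closure under shifting the index by a constant, and closure under taking real-linear combinations. So I would first isolate and prove these two closure lemmas. \emph{Shift invariance}: if $\{Z_n\}_{n}$ satisfies $Z_n = Z_{n-1}+Z_{n-2}$ and $c$ is a fixed integer, then $\{Z_{n+c}\}_{n}$ satisfies the same recurrence, since it is translation-invariant in the index, with seed values $Z_c$ and $Z_{c+1}$. \emph{Linearity}: if $\{Z_n\}_{n}$ and $\{W_n\}_{n}$ both satisfy the Fibonacci recurrence and $a,b\in\mathbb{R}$, then $aZ_n+bW_n = (aZ_{n-1}+bW_{n-1})+(aZ_{n-2}+bW_{n-2})$, so $\{aZ_n+bW_n\}_{n}$ does too, with seed values $aZ_0+bW_0$ and $aZ_1+bW_1$; iterating on the number of summands gives the same conclusion for any finite $\mathbb{R}$-linear combination of Fibonacci sequences.

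Granting those, the theorem follows quickly. Reading each index $n_i$ as the running index $n$ offset by a fixed constant, the shift lemma applied to the Fibonacci sequence $\{X^{(k_i)}_n\}_{n}$ shows that each $\{X^{(k_i)}_{n_i}\}_{n}$ is itself a Fibonacci sequence; then $Y_n=\sum_{i=1}^m a_i X^{(k_i)}_{n_i}$ is a finite $\mathbb{R}$-linear combination of Fibonacci sequences, hence a Fibonacci sequence by the linearity lemma, its seed values obtained by evaluating the sum at $n=0$ and $n=1$.

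The one delicate point — and the main obstacle to a fully rigorous write-up — is the domain of definition. The identity underlying $X^{(k)}_n$ is stated only for $1\le i\le n$ and its verification manipulates terms such as $G_{n-2}$, so the recurrence $X^{(k)}_n = X^{(k)}_{n-1}+X^{(k)}_{n-2}$ is only guaranteed once $n$ exceeds some threshold that can grow with $k$, and the offsets $n_i$ move these thresholds relative to one another. I would dispose of this by observing that any sequence obeying the Fibonacci recurrence for all sufficiently large $n$ extends uniquely to a genuine Fibonacci sequence on all of $\mathbb{N}$ (run the recurrence backwards via $Z_{n-2}:=Z_n-Z_{n-1}$), so one may pass to a common tail on which every $\{X^{(k_i)}_{n_i}\}_{n}$ obeys the recurrence, conclude there, and extend back. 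I would also state at the outset that $n_i$ is to be read as a function of $n$ rather than a constant, since a literally constant index would make $Y_n$ constant in $n$ and therefore not a Fibonacci sequence unless it is identically zero.
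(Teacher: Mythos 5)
Your proposal is correct and takes essentially the same route as the paper: the paper's proof is exactly the computation $Y_{n-1}+Y_{n-2}=\sum_i a_i\bigl(X^{(k_i)}_{n_i-1}+X^{(k_i)}_{n_i-2}\bigr)=\sum_i a_i X^{(k_i)}_{n_i}=Y_n$, which is your shift-plus-linearity argument carried out in one step. The only difference is that you make explicit two points the paper leaves implicit --- that each $n_i$ must be read as tracking the running index $n$, and that the recurrence for $X^{(k)}_n$ only holds for $n$ large enough --- which is a reasonable tightening rather than a change of method.
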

\begin{proof}
$$Y_{n-1} + Y_{n-2} $$ $$= (a_1X^{(k_1)}_{n_1-1} + a_2X^{(k_2)}_{n_2-1} + \dots + a_mX^{(k_m)}_{n_m-1}) + (a_1X^{(k_1)}_{n_1-2} + a_2X^{(k_2)}_{n_2-2} + \dots + a_mX^{(k_m)}_{n_m-2}) $$ $$=a_1(X^{(k_1)}_{n_1-1} + X^{(k_1)}_{n_1-2}) + a_2(X^{(k_2)}_{n_2-1} + X^{(k_2)}_{n_2-2}) + \dots + a_m(X^{(k_m)}_{n_m-1} + X^{(k_m)}_{n_m-2}) $$ $$=a_1X^{(k_1)}_{n_1} + a_2X^{(k_2)}_{n_2} + \dots + a_mX^{(k_m)}_{n_m}$$ $$= Y_n.$$ 
\end{proof}
\subsection{A Meta Characteristic}
We have seen that the above iterated procedure is such that it preserves the generalized Fibonacci structure for the left-hand side terms, which is somewhat remarkable. A final question remains concerning whether these generalized left-hand terms are Fibonacci in their generalized-ness (loosely speaking). More concretely, we explore the question $$X^{(k)}_{n} \stackrel{?}{=} X^{(k-1)}_{n} + X^{(k-2)}_{n}.$$
We define the sequence $\{Y^{(k)}\}_{k\in\mathbb{N}}$ to be such that $$Y^{(k)} = a_1X^{(k_1)}_{n_1} + a_2X^{(k_2)}_{n_2} + \dots + a_mX^{(k_m)}_{n_m}.$$
\begin{theorem}\label{thm_xnk}
$\{Y^{(k)}\}_{k\in\mathbb{N}}$ is a Fibonacci sequence in $k$ for all $k_i,n_i\in\mathbb{N},a_i\in\mathbb{R}$ if and only if the seed values of the terms $G_j\in\{G_j\}_{j\in\mathbb{N}}$ in the left- and right-hand side products in $\{X^{(0)}_n\}_{n\in\mathbb{N}}$ are either $\alpha,\beta=0$, $\alpha,\beta=1$, or $\alpha=-1,\beta=0$.
\end{theorem}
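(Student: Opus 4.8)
The plan is to reduce the ``iff'' to a question about one sequence in $k$, to write $X^{(k)}_n$ in closed form via Binet's formula, and then to solve a short polynomial condition on $\alpha,\beta$. For the \emph{reduction}, I would first show that $\{Y^{(k)}\}_{k\in\mathbb{N}}$ is a Fibonacci sequence for every choice of $a_i\in\mathbb{R}$, $n_i,k_i\in\mathbb{N}$ if and only if $X^{(k)}_n=X^{(k-1)}_n+X^{(k-2)}_n$ for all $n\in\mathbb{N}$ and all $k\geq 2$ --- i.e. if and only if $\{X^{(k)}_n\}_{k\in\mathbb{N}}$ is a Fibonacci sequence in $k$ for each fixed $n$. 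The forward direction follows by taking $m=1$, $a_1=1$ and letting $n_1$ vary; the converse is the computation in the proof of Theorem~\ref{thm_yn}, now carried out in the superscript $k$ (a shifted linear combination of Fibonacci sequences is again Fibonacci). So it suffices to decide when $\{X^{(k)}_n\}_k$ is Fibonacci in $k$.

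For the \emph{closed form}, set $\phi=\tfrac{1+\sqrt5}{2}$, $\bar\phi=\tfrac{1-\sqrt5}{2}$, $g_1=\beta-\alpha\bar\phi$, $g_2=\beta-\alpha\phi$, and identify a Fibonacci sequence $C$ with the pair $(c_1,c_2)$ determined by $C_n=\tfrac1{\sqrt5}(c_1\phi^n-c_2\bar\phi^n)$. By (\ref{eqn:special_case}), Binet's formula, and $\phi^{-1}=-\bar\phi$, $\bar\phi^{-1}=-\phi$, the sequence $G$ corresponds to $(g_1,g_2)$. The ``convolution'' $(A\star B)_n:=A_{n-i+1}B_i+A_{n-i}B_{i-1}$ appearing throughout the hierarchy is independent of $i$ for Fibonacci $A,B$ (a one-line telescoping with the recursions of $A$ and $B$), hence equals $A_nB_1+A_{n-1}B_0$; substituting the Binet forms and using $\phi\bar\phi=-1$, $\phi^{n+1}+\phi^{n-1}=\sqrt5\,\phi^n$ and $\bar\phi^{n+1}+\bar\phi^{n-1}=-\sqrt5\,\bar\phi^n$ shows that $\star$ is \emph{componentwise multiplication} of these pairs. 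Since $X^{(0)}=G\star G$ and every later $X^{(k)}$ is a $\star$-product of earlier ones ($X^{(2m)}=X^{(m-1)}\star X^{(m-1)}$ and $X^{(2m+1)}=X^{(m)}\star X^{(m-1)}$), a strong induction on $k$, adding exponents at each step, yields
$$X^{(k)}_n=\frac{1}{\sqrt5}\Bigl(g_1^{k+2}\phi^n-g_2^{k+2}\bar\phi^n\Bigr),\qquad k,n\in\mathbb{N}.$$

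To \emph{conclude}, linear independence of $n\mapsto\phi^n$ and $n\mapsto\bar\phi^n$ makes ``$X^{(k)}_n=X^{(k-1)}_n+X^{(k-2)}_n$ for all $n$ and all $k\geq2$'' equivalent to $g_j^{k+2}=g_j^{k+1}+g_j^{k}$ for $j=1,2$ and $k\geq2$, hence to $g_j^{2}\bigl(g_j^{2}-g_j-1\bigr)=0$, hence to $g_1,g_2\in\{0,\phi,\bar\phi\}$. The map $(\alpha,\beta)\mapsto(g_1,g_2)$ is a linear bijection of $\mathbb{R}^2$ (determinant $\phi-\bar\phi=\sqrt5$), so this condition holds for exactly nine pairs $(\alpha,\beta)$; writing $g_1=(\beta-\tfrac{\alpha}{2})+\tfrac{\alpha}{2}\sqrt5$ (and similarly $g_2$) and solving, the three with $\alpha,\beta\in\mathbb{Q}$ are exactly $(0,0)$, $(1,1)$, $(-1,0)$. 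For those three the ``if'' direction is also immediate: for $(1,1)$ and $(-1,0)$ one has $g_j^2=g_j+1$, so the relation in $k$ holds outright, while for $(0,0)$ every $X^{(k)}$ vanishes. Together with the reduction this proves the theorem.

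I expect the closed-form step to be the main obstacle: one must check carefully that $\star$ really is componentwise multiplication in the $(\phi,\bar\phi)$-coordinates (the sign asymmetry $\bar\phi^{n+1}+\bar\phi^{n-1}=-\sqrt5\,\bar\phi^n$ is the easy thing to get wrong) and then run the strong induction cleanly through the hierarchy's irregular indexing. One subtlety worth stating outright: the ``only if'' direction genuinely requires the seed values to be rational (equivalently, after rescaling, integral) --- over all of $\mathbb{R}$ the condition $g_1,g_2\in\{0,\phi,\bar\phi\}$ is also satisfied by, e.g., $\alpha=0,\ \beta=\phi$, for which $\{X^{(k)}_n\}_k=\{\phi^{k+2}F_n\}_k$ is still a Fibonacci sequence in $k$.
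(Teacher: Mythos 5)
Your proof is correct, but it takes a genuinely different route from the paper's, and the difference is instructive. The paper works entirely at $k=2$: it expands $X^{(1)}_n+X^{(0)}_n$ and $X^{(2)}_n$ as combinations of $G_n,G_{n-1},G_{n-2}$ (equations (\ref{eqn:sys1}) and (\ref{eqn:sys2})), equates coefficients to obtain a three-equation polynomial system in $\alpha,\beta$, and defers all higher $k$ to ``a simple inductive argument.'' Your Binet-coordinate computation --- showing that the convolution $A\star B$ is componentwise multiplication of the pairs $(a_1,a_2)$, $(b_1,b_2)$, hence that $X^{(k)}_n=\tfrac{1}{\sqrt5}\bigl(g_1^{k+2}\phi^n-g_2^{k+2}\bar\phi^n\bigr)$ --- replaces both the coefficient comparison and the unstated induction with a single closed form valid for all $k$ at once, and reduces the entire theorem to $g_j^{2}\bigl(g_j^{2}-g_j-1\bigr)=0$. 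This is not only cleaner but more honest about the solution set: the paper's system is obtained by cancelling factors of $\beta$, $\alpha$, and $\alpha^{2}$ from the coefficient identities, which silently discards the solutions in which one of $\alpha,\beta$ vanishes without both vanishing; your nine-point analysis of $(g_1,g_2)\in\{0,\phi,\bar\phi\}^2$ recovers exactly these, e.g.\ $(\alpha,\beta)=(0,\phi)$, for which $X^{(k)}_n=\phi^{k+2}F_n$ really is Fibonacci in $k$. So your closing remark that the ``only if'' direction needs $\alpha,\beta\in\mathbb{Q}$ is not a pedantic caveat but a correction to the theorem as stated, since the paper permits $\alpha,\beta\in\mathbb{R}$, and your proof is the one that makes the needed hypothesis visible. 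The one step worth writing out in full if you formalize this is the opening reduction equating ``$\{Y^{(k)}\}$ is Fibonacci for all choices of $a_i,k_i,n_i$'' with ``$\{X^{(k)}_n\}_k$ is Fibonacci for each fixed $n$''; the paper also asserts that reduction without proof, and your sketch (specialize to $m=1$ in one direction, linearity as in Theorem~\ref{thm_yn} in the other) is the right argument.
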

\begin{proof}
We begin by proving the case where $Y^{(k)} = X^{(k)}_n$, as the more general case for $Y^{(k)} = a_1X^{(k_1)}_{n_1} + a_2X^{(k_2)}_{n_2} + \dots + a_mX^{(k_m)}_{n_m}$ follows trivially.
\par
In the case where $k=2$,
$$X^{(1)}_n + X^{(0)}_n = \beta X^{(0)}_n + \alpha X^{(0)}_{n-1} + \beta G_n + \alpha G_{n-1} $$
\begin{equation}\label{eqn:sys1}
=(\beta+\beta^2)G_n + (\alpha+2\alpha\beta)G_{n-1} + \alpha^2G_{n-2}.
\end{equation}
We compare this with $$X^{(2)}_n =  X^{(0)}_1X^{(0)}_n + X^{(0)}_0X^{(0)}_{n-1} $$ 
\begin{equation}\label{eqn:sys2}
=(\beta^3+\alpha^2\beta)G_n + (\alpha^3+3\alpha\beta^2-\alpha^2\beta)G_{n-1} + (\beta\alpha^2+\alpha^2\beta-\alpha^3)G_{n-2}.
\end{equation}
Note that $(\alpha,\beta)=(0,0)$ is a trivial solution to (\ref{eqn:sys1}) and (\ref{eqn:sys2}). This leaves us with the 3-part system of equations: $$1+\beta = \beta^2+\alpha^2,$$ $$1+2\beta=\alpha^2+3\beta^2-\alpha\beta,$$ $$\text{and}$$ $$2\beta-\alpha=1$$ which are satisfied if and only if $(\alpha,\beta) = (1,1)$ or $(\alpha,\beta) = (-1,0)$.
\par
Finally, it follows from a simple inductive argument that $X^{(k)} = X^{(k-1)} + X^{(k-2)}$ for all $k\in\mathbb{N}$ if and only if $\alpha,\beta=0$, $\alpha,\beta=1$, or $\alpha=-1,\beta=0$.
\end{proof}

\section{Acknowledgements}
I would like to thank Liyang Zhang for his feedback, as well as for forcing me out of my comfort zone to explore ways to engage with other peoples' contributions to the thoroughly-investigated topic of Fibonacci sequences.
\nocite{ruggles1963some}
\nocite{introduction}
\bibliography{citations}{}
\bibliographystyle{ieeetr}

\end{document}